\newcommand{\C}{\mathbb{C}}
\newcommand{\bl}{\mathbf{l}}
\newcommand{\bu}{\mathbf{u}}
\newcommand{\mH}{\mathcal{H}}
\newcommand{\integers}{\mathbb{Z}}
\newcommand{\reals}{\mathbb{R}}
\newcommand{\chim}{\mathrm{Chim}}
\newcommand{\des}{{\rm des}\,}
\newcommand{\la}{\lambda}
\newcommand{\cone}{\mathrm{cone}}
\newcommand{\vspan}{\mathrm{span}}
\newcommand{\note}[1]{\par \smallskip\noindent
  \framebox{\begin{minipage}[c]{0.95 \textwidth} \ttfamily NOTE:
      #1 \end{minipage}}\par\smallskip}
\newtheorem{theorem}{Theorem}[section]
\newtheorem{lemma}[theorem]{Lemma}
\newtheorem{conjecture}[theorem]{Conjecture}
\theoremstyle{definition}
\newtheorem{remark}[theorem]{Remark}
\theoremstyle{remark}
\title[Generating functions and triangulations for lecture hall cones]{Generating functions and triangulations\\ for lecture hall cones}
\author[M. Beck]{
Matthias Beck
}
\address{Department of Mathematics\\
         San Francisco State University\\
         San Francisco, CA 94132, U.S.A.}
\email{mattbeck@sfsu.edu}
\author[B. Braun]{
Benjamin Braun
}
\address{Department of Mathematics\\
         University of Kentucky\\
         Lexington, KY 40506--0027, U.S.A.}
\email{benjamin.braun@uky.edu}
\author[M. K\"oppe]{
Matthias K\"oppe
}
\address{Department of Mathematics\\
         University of California\\
         Davis, CA 95616, U.S.A.}
\email{mkoeppe@math.ucdavis.edu}
\author[C. Savage]{
Carla D. Savage
}
\address{Department of Computer Science\\
         North Carolina State University \\
         Raleigh, NC 27695-8206, U.S.A.}
\email{savage@ncsu.edu}
\author[Z. Zafeirakopoulos]{
 Zafeirakis Zafeirakopoulos
}
\address{
Institute of Information Technologies\\
Gebze Technical University\\
Kocaeli, Turkey}
\email{zafeirakopoulos@gtu.edu.tr}
\thanks{
Thanks to Christian Haase for directing our attention to the existence of chimney polytopes and their triangulations, and to the anonymous referees for their helpful suggestions.
Matthias Beck was partially supported by grant DMS-1162638 of the U.S.\ National Science Foundation.
Benjamin Braun was partially supported by grant H98230-13-1-0240 of the U.S.\ National Security Agency.
Matthias K\"oppe was partially supported by grant DMS-0914873 of the U.S.\ National Science Foundation.
Carla Savage was partially supported by grant \# 244963 from the Simons Foundation.
Zafeirakis Zafeirakopoulos was partially supported by the strategic program \emph{Innovatives O\"O 2010 plus} by the Upper 
Austrian Government and by the Austrian Science Fund (FWF) grant W1214-N15 (project DK6) and
special research group Algorithmic and Enumerative Combinatorics SFB F50-06.
The authors thank the American Institute of Mathematics for support of our 
SQuaRE working group \emph{Polyhedral Geometry and Partition Theory.}
}
\begin{document}

\begin{abstract}
We investigate the arithmetic-geometric structure of the lecture hall cone
\[
L_n \ := \ \left\{\lambda\in \reals^n: \, 0\leq \frac{\lambda_1}{1}\leq \frac{\lambda_2}{2}\leq \frac{\lambda_3}{3}\leq \cdots \leq \frac{\lambda_n}{n}\right\} .
\]
We show that $L_n$ is isomorphic to the cone over the lattice pyramid of a reflexive simplex whose Ehrhart $h^*$-polynomial is given by the $(n-1)$st Eulerian polynomial, and prove that lecture hall cones admit regular, flag, unimodular triangulations.  
After explicitly describing the Hilbert basis for $L_n$, we conclude with observations and a conjecture regarding the structure of unimodular triangulations of $L_n$, including connections between enumerative and algebraic properties of $L_n$ and cones over unit cubes.
\end{abstract}

\maketitle

\section{Introduction}

For any subset $K\subset \reals^n$, we define the \emph{integer point transform} of $K$ to be the formal power series
\[
\sigma_K(z_1,\ldots,z_n) \ :=\sum_{m\in K \cap \integers^n}z_1^{m_1}\cdots z_n^{m_n} .
\]
Given a pointed rational cone $C:=\{x\in \reals^n: \, A \, x \ge 0 \}$, 
it is well known that $\sigma_C(z_1,\ldots,z_n)$ is  a rational function~\cite{BeckRobinsCCD}.
(Here \emph{pointed} requires that $C$ does not contain any nontrivial linear subspace of $\reals^n$ and
\emph{rational} requires that $A$ have rational---or equivalently, integral---entries.)
For $n\in \integers_{\geq 1}$, the \emph{lecture hall cone} is
\[
L_n \ := \ \left\{\lambda\in \reals^n: \, 0\leq \frac{\lambda_1}{1}\leq \frac{\lambda_2}{2}\leq \frac{\lambda_3}{3}\leq \cdots \leq \frac{\lambda_n}{n}\right\} .
\]
The elements of $L_n\cap \integers^n$ are called \emph{lecture hall partitions}, and the generating functions
$\sigma_{L_n}(z_1, z_2, \ldots,z_n)$ and $\sigma_{L_n}(q,q,\ldots,q)$ have been the subject of active research~\cite{sLectHallGorenstein,BME1,osheajones,BrightSavage,CLS2005,pensylsavage,savageschuster} since the discovery of the following  surprising result.
\begin{theorem}[Bousquet-M\'{e}lou \& Eriksson \cite{BME1}]
For $n\geq 1$, 
\[
\sigma_{L_n}(q,q,\ldots,q) \ = \ \frac{1}{\prod_{j=1}^n(1-q^{2j-1})} \, .
\]
\end{theorem}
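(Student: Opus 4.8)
The plan is to exploit that $L_n$ is a \emph{simplicial} cone and to read off $\sigma_{L_n}(q,\dots,q)$ from its fundamental parallelepiped. Rewriting the defining inequalities as $\lambda_1\ge 0$ together with $(i+1)\lambda_i\le i\lambda_{i+1}$ for $1\le i\le n-1$, one sees that $L_n$ is cut out by exactly $n$ facets in $\reals^n$, so it is pointed and simplicial with $n$ extreme rays. First I would compute the primitive ray generators explicitly by making all but one of these inequalities tight: this yields $u_k=\sum_{i=k}^n i\,e_i=(0,\dots,0,k,k+1,\dots,n)$ for $1\le k\le n-1$, together with $u_n=e_n$. Their coordinate sums are $|u_k|=k+(k+1)+\cdots+n$ for $k<n$ and $|u_n|=1$, and arranging the $u_k$ as rows gives an upper-triangular matrix with diagonal $(1,2,\dots,n-1,1)$, whence $\det(u_1,\dots,u_n)=(n-1)!$.

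Second, by the standard half-open decomposition of a simplicial cone, every lattice point of $L_n$ is uniquely a nonnegative integer combination of the $u_k$ plus a lattice point of the half-open box $\Pi=\{\sum_k t_k u_k:0\le t_k<1\}$; hence
\[
\sigma_{L_n}(q,\dots,q)=\frac{B_n(q)}{\prod_{k=1}^n\left(1-q^{|u_k|}\right)},\qquad B_n(q):=\sum_{b\in\Pi\cap\integers^n}q^{|b|}.
\]
Since $\det(u_1,\dots,u_n)=(n-1)!$, the box $\Pi$ contains exactly $(n-1)!$ lattice points, so $B_n$ is a polynomial with $B_n(1)=(n-1)!$. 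The theorem then becomes the clean polynomial identity
\[
B_n(q)\,\prod_{j=1}^n\left(1-q^{2j-1}\right)=\prod_{k=1}^n\left(1-q^{|u_k|}\right),
\]
which I have checked directly for small $n$ (for instance $B_2=1$, and $B_3=1+q^3$ with the box point $(0,1,2)$ contributing the term $q^3$).

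The main obstacle is the explicit determination of $B_n(q)$, i.e.\ identifying the $(n-1)!$ box points together with their weights $|b|$. The count $(n-1)!$ and the evaluation $B_n(1)=(n-1)!$ strongly suggest that these points are indexed by permutations of $\{1,\dots,n-1\}$, with $|b|$ given by a Mahonian-type statistic, so that $B_n$ is a $q$-analogue of the $(n-1)$st Eulerian polynomial, consistent with the $h^*$-description recorded in the abstract. Producing this parametrization explicitly---solving $b=\sum_k t_k u_k$ with $0\le t_k<1$ over the index-$(n-1)!$ sublattice and reading off $|b|$---is the crux, and the step I expect to be genuinely combinatorial rather than formal. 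I emphasize that no ``unimodular'' shortcut is available: the extreme-ray weights $|u_k|$ (e.g.\ $1,5,6$ for $n=3$) do not match the target exponents $2j-1$, so $L_n$ is not a free cone for $n\ge 3$ and the product structure comes precisely from the nontrivial box polynomial rather than from the rays alone.

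Finally, once $B_n(q)$ is in hand, the displayed identity is a $q$-series simplification: after cancelling the factor $1-q=1-q^{|u_n|}$ and the cyclotomic factors shared by $\prod_j(1-q^{2j-1})$ and $\prod_k(1-q^{|u_k|})$, what remains is a classical Eulerian/Worpitzky-type identity, and I would close the argument by induction on $n$. An alternative route is to prove the recursion $\sigma_{L_n}(q,\dots,q)=\tfrac{1}{1-q^{2n-1}}\,\sigma_{L_{n-1}}(q,\dots,q)$ by a weight-preserving bijection; I expect this to require the same essential combinatorial input, since naive one-variable peeling is obstructed by the ceilings $\lceil \tfrac{n}{n-1}\lambda_{n-1}\rceil$ arising from the non-integer slopes in the defining inequalities.
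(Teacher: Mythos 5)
There is a genuine gap, and you have in fact flagged it yourself: everything you do is correct but preparatory, and the step you defer (``the crux'') is the entire content of the theorem. Your identification of $L_n$ as a simplicial cone, the ray generators $u_k=(0,\dots,0,k,k+1,\dots,n)$ and $u_n=e_n$, the determinant $(n-1)!$, and the half-open (fundamental parallelepiped) decomposition giving $\sigma_{L_n}(q,\dots,q)=B_n(q)/\prod_k(1-q^{|u_k|})$ are all right, and your small cases check out (e.g.\ $B_3=1+q^3$ with $|u_k|\in\{1,5,6\}$, and one can verify $B_4=(1+q^5)(1+q^3+q^6)$ against $|u_k|\in\{1,7,9,10\}$). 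But the resulting polynomial identity $B_n(q)\prod_{j=1}^n(1-q^{2j-1})=\prod_{k=1}^n(1-q^{|u_k|})$ is not a ``clean'' reformulation that then yields to a classical Worpitzky-type simplification; it is an exact restatement of the Lecture Hall Theorem, with all of its difficulty concentrated in the unproved description of the $(n-1)!$ box points and their weights. No explicit product formula for $B_n(q)$ in the all-variables-equal grading is known to be easy, and the alternative route you mention --- the recursion $\sigma_{L_n}=\frac{1}{1-q^{2n-1}}\sigma_{L_{n-1}}$ --- is true of the right-hand sides but is precisely what resists a naive projection argument, for the ceiling-function reason you yourself note. So the proposal sets up a correct framework and then stops where the theorem begins.

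For context: the paper does not prove this statement either --- it is quoted as a known result of Bousquet-M\'elou and Eriksson, whose original proof (and the later proofs by Andrews, Yee, and others) proceeds by quite different and substantially more involved means (a delicate induction/bijection adapted to the ratios $\lambda_i/i$, or Bott-formula and $q$-series machinery), not by computing the box polynomial of the simplicial cone. If you want to complete your approach, the missing ingredient is a parametrization of $\Pi\cap\integers^n$ --- for instance via the inverses of the residues $t_k$ modulo the diagonal entries $1,2,\dots,n-1,1$ --- together with a proof that the resulting weight generating function factors as $\prod_k(1-q^{|u_k|})/\prod_j(1-q^{2j-1})$; this is a genuinely open-ended combinatorial task, not a formal verification.
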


Our focus in this note is on the arithmetic-geometric structure of the lecture hall cone, continuing our work
in \cite{sLectHallGorenstein}.
After giving the necessary background and terminology in Section~\ref{sec:background}, we prove in
Section~\ref{sec:eulerianreflexive} that $L_n$ is isomorphic to the cone over the lattice pyramid of a reflexive simplex whose Ehrhart $h^*$-polynomial is given by the $(n-1)$st Eulerian polynomial.
In Section~\ref{sec:triangulation} we prove that the lecture hall cones admit regular, flag, unimodular
triangulations.  In Section~\ref{sec:hilbert} we explicitly describe the Hilbert basis for $L_n$.
We conclude in Section~\ref{sec:outlook} with observations and a conjecture regarding the structure of unimodular triangulations of $L_n$, including connections between enumerative and algebraic properties of $L_n$ and cones over unit cubes.


\section{Background}\label{sec:background}

This section contains the necessary terminology and background literature to understand our results; it can
safely be skipped by the experts.

\subsection{Eulerian polynomials}
We recall the \emph{descent statistic} for $\pi \in S_n$
\[
\des(\pi) \ := \ |\{i: \, 1\leq i\leq n-1, \ \pi_i> \pi_{i+1}\}| \, ,
\]
which is encoded in the \emph{Eulerian polynomial}
\[
A_n(z) \ := \ \sum_{\pi\in S_{n}}z^{\des(\pi)} .
\]
An alternative definition of the Eulerian polynomial is via
\[
\sum_{t\geq 0}(1+t)^nz^t \ = \ \frac{A_{n}(z)}{(1-z)^{n+1}} \, .
\]

\subsection{Lattice polytopes}
A \emph{lattice polytope} is the convex hull of finitely many integer vectors in $\reals^n$.
The \emph{Ehrhart polynomial} of a lattice polytope $P$ is 
\[
i(P,t) \ := \ \left| tP \cap \integers^n \right|
\]
and the \emph{Ehrhart series} of $P$ is $1 + \sum_{t \ge 1} i(P,t) \, z^t$.
Ehrhart's theorem \cite{Ehrhart} asserts that $i(P,t)$ is indeed a polynomial; equivalently, the Ehrhart
series evaluates to a rational function of the form
\[
1 + \sum_{t \ge 1} i(P,t) \, z^t \ = \ \frac{h^*(z)}{(1-z)^{\dim(P)+1}},
\]
where we write $h^*(z) = \sum_{j=0}^{\dim(P)}h^*_j(P) \, z^j$.
By a theorem of Stanley \cite{StanleyDecompositions}, the $h^*_j(P)$ are nonnegative integers.
The polynomial $h^*(z)$ is called the \emph{Ehrhart $h^*$-polynomial} of~$P$.
Ehrhart polynomials and series have far-reaching applications in combinatorics, number theory, and beyond
(see, e.g., \cite{BeckRobinsCCD} for more).

A lattice polytope is \emph{reflexive} if its polar dual is also a lattice polytope.
A theorem of Hibi \cite{HibiDualPolytopes} says that $P$ is a lattice translate of a reflexive polytope if and only if the vector $(h_0^*(P),\ldots,h^*_{\dim(P)}(P))$ is symmetric.
Reflexive polytopes are of considerable interest in combinatorics, algebraic geometry, and theoretical
physics~\cite{BatyrevDualPolyhedra}.

\subsection{Triangulations}
A \emph{triangulation} of a lattice polytope $P$ is a collection of simplices that only meet in faces and whose
union is $P$.
A triangulation is \emph{unimodular} if all of its maximal simplices are. (A \emph{unimodular} simplex is the
convex hull of some $v_0, v_1, \dots, v_n \in \integers^n$ such that $\{ v_1 - v_0, v_2 - v_0, \dots, v_n - v_0 \}$ is a lattice basis of $\integers^n$.)
A triangulation of $P$ is \emph{regular} if there is a convex function $P \to \reals$ whose domains of
linearity are exactly the maximal simplices in the triangulation.
A triangulation is \emph{flag} if, viewed as a simplicial complex, its minimal non-faces are pairs of vertices.
(See, e.g., \cite[Section~1]{regulartriangulations} for more details.)

\newcommand\Hilb{H}
\newcommand\hilb{h}

\subsection{Gradings of cones}
Let $C \subset \reals^n$ be a pointed, rational $n$-dimensional cone. A \emph{grading} of $C$ is a vector $a \in
\integers^n$ such that $a \cdot p > 0$ for all $p \in C \setminus \{0\}$. With such a grading $a \in \integers^n$, we
associate its \emph{Hilbert function}
\[
    \hilb_C^a(t) \ := \ \left| \left\{ m \in C \cap \integers^n : \, a \cdot m = t \right\} \right| .
\]
Since $C$ is pointed, $\hilb_C^a(t) < \infty$ for all $t \in \integers_{\ge 1}$ and thus we can define the
\emph{Hilbert series}
\[
    \Hilb_C^a(z) \ := \ 1 + \sum_{t \ge 1} \hilb_C^a(t) \, z^t .
\]
It is naturally connected to the integer transform of $C$ via 
\[
\Hilb_C^a(z) \ = \ \sigma_C(z^{a_1},\dots,z^{a_d}) \, .
\]
For example, $\sigma_{L_n}(q, q, \dots, q)$ is the Hilbert series of the lecture hall cone $L_n$ with respect to the grading $(1, 1, \dots, 1)$, and the Ehrhart series of $P$ is the Hilbert series of the \emph{cone over} $P$,
\[
\cone(P) \ := \ \vspan_{\reals_{\ge 0}}\{(1,p): \, p\in P \} \, ,
\]
with respect to the grading $(1, 0, \dots, 0)$.


\section{Eulerian Gradings and Reflexive Simplices}\label{sec:eulerianreflexive}

The \emph{lecture hall polytope} was defined in \cite{savageschuster} as 
\[
P_n \ := \ \left\{ \la \in L_n : \, \la_n \leq n \right\} .
\]
From \cite[Corollary~2]{CLS2005} we know that $|tP_n \cap \integers^n| = (t+1)^n$, which means that the
Ehrhart series of $P_n$ is $\sum_{t \geq 0}(t+1)^n z^t$; so the Eulerian polynomial $A_n(z)$ is the $h^*$-polynomial of~$P_n$.

We are interested in associating a reflexive polytope with the lecture hall cone $L_n$.
The coefficients of an Eulerian polynomial are symmetric.
Note, however that $P_n$ has dimension $n$ whereas $A_n$ has degree $n-1$, and so $P_n$ is not reflexive.
On the other hand, Savage and Schuster~\cite[Corollary 2(b) \& Lemma 1]{savageschuster} proved that
\[
\sum_{\lambda\in L_n\cap \integers^n} z^{ \left\lceil \frac{ \lambda_n } n \right\rceil } \ = \ \frac{ A_n(z) }{ (1-z)^n } \, ,
\]
which is almost a Hilbert series (the exponent vector on the left-hand side does not correspond to a grading).
Instead we consider the grading $(0, \dots, 0, -1, 1)$, which defines $\deg(\lambda):=\lambda_n-\lambda_{n-1}$
for $\lambda=(\lambda_1,\ldots, \lambda_n)\in L_n$.

\begin{theorem}\label{thm:lecthalldes}
For all $n\geq 1$,
\[
\sigma_{L_n}(1,1,\ldots,1,z^{-1},z) 
\ = \ \sum_{\lambda\in L_n\cap \integers^n}z^{\deg(\lambda)}
\ = \ \frac{A_{n-1}(z)}{(1-z)^{n}} \, .
\]
\end{theorem}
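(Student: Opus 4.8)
The first equality is immediate from the definitions: substituting $z_i = 1$ for $i \le n-2$, $z_{n-1} = z^{-1}$, and $z_n = z$ into $\sigma_{L_n}$ turns the monomial attached to $\lambda$ into $z^{\lambda_n - \lambda_{n-1}} = z^{\deg(\lambda)}$. The plan for the substantive identity is to peel off the last coordinate $\lambda_n$ and reduce to the Savage--Schuster formula in one lower dimension. First I would check that $\deg$ is genuinely a grading on $L_n$, so that the left-hand side is a well-defined power series: for $\lambda \in L_n \setminus \{0\}$ the inequality $\lambda_{n-1}/(n-1) \le \lambda_n/n$ gives $\lambda_n \ge \frac{n}{n-1}\lambda_{n-1} \ge \lambda_{n-1}$, and a short case analysis on whether $\lambda_{n-1} = 0$ shows $\lambda_n - \lambda_{n-1} > 0$.

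The key observation is that among the defining inequalities of $L_n$, only the last one, $\lambda_{n-1}/(n-1) \le \lambda_n/n$, involves $\lambda_n$; the remaining inequalities say precisely that $(\lambda_1, \dots, \lambda_{n-1}) \in L_{n-1}$. Thus I would group the sum according to the prefix $\mu := (\lambda_1, \dots, \lambda_{n-1})$, which ranges freely over $L_{n-1} \cap \integers^{n-1}$. For each fixed $\mu$, the coordinate $\lambda_n$ ranges over all integers with $\lambda_n \ge \lceil n \lambda_{n-1}/(n-1) \rceil$, and summing the resulting geometric series gives
\[
\sum_{\lambda_n \ge \lceil n\lambda_{n-1}/(n-1)\rceil} z^{\lambda_n - \lambda_{n-1}} \ = \ \frac{z^{\lceil n\lambda_{n-1}/(n-1)\rceil - \lambda_{n-1}}}{1 - z} \ = \ \frac{z^{\lceil \lambda_{n-1}/(n-1)\rceil}}{1 - z} \, ,
\]
where the last equality uses $\lceil x - m \rceil = \lceil x \rceil - m$ for $m \in \integers$ together with $\frac{n}{n-1}\lambda_{n-1} - \lambda_{n-1} = \frac{\lambda_{n-1}}{n-1}$.

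Summing over all prefixes then yields
\[
\sum_{\lambda \in L_n \cap \integers^n} z^{\deg(\lambda)} \ = \ \frac{1}{1-z} \sum_{\mu \in L_{n-1} \cap \integers^{n-1}} z^{\lceil \mu_{n-1}/(n-1)\rceil} \, ,
\]
and applying the Savage--Schuster identity $\sum_{\mu\in L_{m}\cap\integers^{m}} z^{\lceil \mu_m/m\rceil} = A_m(z)/(1-z)^m$ with $m = n-1$ gives $A_{n-1}(z)/(1-z)^{n-1}$, hence the claimed $A_{n-1}(z)/(1-z)^n$ after multiplying by $1/(1-z)$. This reduction requires $n \ge 2$; the base case $n = 1$, where $\deg(\lambda) = \lambda_1$ and the identity reads $\sum_{\lambda_1 \ge 0} z^{\lambda_1} = 1/(1-z) = A_0(z)/(1-z)$, I would verify directly.

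The only genuine subtlety is the ceiling simplification in the displayed geometric sum, together with the observation that the constraints cleanly decouple into an $L_{n-1}$-condition on the prefix plus a single lower bound on $\lambda_n$; since $\deg$ is a grading, each exponent appears with finite multiplicity and everything else is formal manipulation of convergent power series. It is worth noting that the mild index shift from $A_n$ (Savage--Schuster) to $A_{n-1}$ (our statement) is produced exactly by this peeling step, which trades the top variable for the extra factor $1/(1-z)$ while passing to the lecture hall cone of one lower dimension.
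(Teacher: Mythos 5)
Your proof is correct. The core idea---that only the last defining inequality of $L_n$ involves $\lambda_n$, so one can decouple the top coordinate from a prefix lying in a lower-dimensional lecture hall object---is the same as in the paper, but the two arguments organize the computation differently and finish with different known inputs. The paper fixes the degree $t$ and observes that the degree-$t$ lattice points of $L_n$ are in bijection with $tP_{n-1}\cap\integers^{n-1}$ (via $\lambda_n=\lambda_{n-1}+t$ and the equivalence $\lambda_{n-1}/(n-1)\le t$), then invokes $i(P_{n-1},t)=(t+1)^{n-1}$ from \cite{CLS2005} together with the generating-function definition of $A_{n-1}(z)$. You instead fix the prefix $\mu\in L_{n-1}\cap\integers^{n-1}$, sum the geometric series over $\lambda_n\ge\lceil n\mu_{n-1}/(n-1)\rceil$ to extract a factor $1/(1-z)$ and the statistic $\lceil\mu_{n-1}/(n-1)\rceil$, and then quote the Savage--Schuster ceiling identity (which the paper states just before the theorem but does not actually use in its proof). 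Your ceiling manipulation $\lceil n\mu_{n-1}/(n-1)\rceil-\mu_{n-1}=\lceil\mu_{n-1}/(n-1)\rceil$ is the same arithmetic as the paper's equivalence, just read in the other order of summation. Your version buys a cleaner explanation of where the index shift from $A_n$ to $A_{n-1}$ comes from (trading $\lambda_n$ for the factor $1/(1-z)$), and it is more careful about two points the paper glosses over: verifying that $\deg$ is genuinely a grading so the series is well defined, and the base case $n=1$. The paper's version buys a direct geometric interpretation of the degree-$t$ slice as a dilate of the lecture hall polytope, which it then reuses when identifying $Q_n$ and $R_n$.
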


\begin{proof}
Fix $t \geq 0$.  We show that the map $(\la_1, \ldots, \la_n) \mapsto  (\la_1, \ldots, \la_{n-1})$
is a bijection from the set of elements $\la \in L_n$  satisfying $\deg(\lambda)=t$
to $tP_{n-1} \cap \integers^{n-1}$.
Observe first that if $\lambda$ is a lecture hall partition, then $\lambda_n - \lambda_{n-1}= t$ implies
\[
\frac{ \lambda_{n-1} }{ n-1 } \ \le \ \frac { \lambda_n } n \ = \ \frac{ \lambda_{n-1} + t } n
\]
which in turn simplifies to
\[
\frac { \lambda_{n-1} }{ n-1 } \ \le \ t \, .
\]
It is immediate that our map is injective, since the preimage of a point in $tP_{n-1} \cap \integers^{n-1}$ is uniquely determined by adding $t$ to $\lambda_{n-1}$.
To see that the map is surjective, suppose that $(\la_1, \ldots, \la_{n-1}) \in tP_{n-1} \cap \integers^{n-1}$ and consider the partition
\[
(\la_1, \ldots, \la_{n-1}, \la_{n-1}+t) \, .
\]
This is an element of $L_n$ of degree $t$, which is verified by observing that
\[
\frac{\la_{n-1}+t}{n} \ \ge \ \frac{\la_{n-1}}{n}+\frac{1}{n}\frac{\la_{n-1}}{(n-1)} \ = \ \frac{(n-1)\la_{n-1}+\la_{n-1}}{n(n-1)} \ = \ \frac{n\la_{n-1}}{n(n-1)} \ = \ \frac{\la_{n-1}}{n-1} \, ,
\]
and hence our map is surjective.

Thus, 
\[\sum_{\lambda\in L_n\cap \integers^n} z^{\deg(\lambda)}
\ = \ \sum_{t \geq 0}i(P_{n-1},t) \, z^t
\ = \ \frac{A_{n-1}(z)}{(1-z)^{n}} \, . \qedhere
\]
\end{proof}

Throughout the remainder of this note we represent a lecture hall partition $\lambda$ as either the row vector $(\lambda_1,\ldots,\lambda_n)$ or as a column vector 
\[
\left[
\begin{array}{c}
\lambda_n \\
\lambda_{n-1}\\
\vdots \\
\lambda_1
\end{array}
\right] .
\]
Theorem~\ref{thm:lecthalldes} says that $A_{n-1}(z)$ is the $h^*$-polynomial of the $(n-1)$-dimensional simplex obtained as the intersection of $L_n$ with the hyperplane $\lambda_n-\lambda_{n-1}=1$.
Because the columns of the matrix below are the minimal ray generators for $L_n$ and they each lie on the hyperplane $\lambda_n-\lambda_{n-1}=1$, these columns must form the vertices of the intersection, which we denote
\[
Q_n \ := \ \mathrm{conv}\left[
\begin{array}{ccccc}
1 & n & n & \cdots & n \\
0 & n-1 & n-1 & \cdots & n-1 \\
0 & 0 & n-2 & \cdots & n-2 \\
\vdots & \vdots &  & \ddots & \vdots \\
0 & 0 & 0 & \cdots & 1 \\
\end{array} 
\right] \, .
\]
We will establish two geometric properties of $Q_n$.

Applying to $Q_n$ the unimodular transformation that takes consecutive row differences produces the polytope
\[
R_n \ := \ \mathrm{conv}\left[
\begin{array}{ccccc}
1 & 1 & 1 & \cdots & 1 \\
0 & n-1 & 1 & \cdots & 1 \\
0 & 0 & n-2 & \cdots & 1 \\
\vdots & \vdots &  & \ddots & \vdots \\
0 & 0 & 0 & \cdots & 1 \\
\end{array}
\right] .
\]
Note that $R_n$ is an $(n-1)$-dimensional simplex embedded at height $1$ in $\reals^n$.
The unimodular transformation above shows that $L_n$ is unimodularly equivalent to $\cone(R_n)$---this
means that $\cone(R_n)$ is the image of $L_n$ under some mapping in $\mathrm{SL}_n(\integers)$--- and hence we can freely use either presentation of this cone with respect to lattice-point enumeration.

Among the vertices for $R_n$, the right-most column in the defining matrix is the apex of a height-1 lattice pyramid over the convex hull of the remaining $n-1$ columns, which we denote by $\tilde{R}_n$.
Theorem~\ref{thm:lecthalldes} can be used to show that $\tilde{R}_n$ satisfies the following condition.

\begin{theorem}
The polytope $\tilde{R}_n$ is a lattice translate of a reflexive polytope.
\end{theorem}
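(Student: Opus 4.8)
The plan is to compute the Ehrhart $h^*$-polynomial of $\tilde{R}_n$ explicitly and then invoke Hibi's symmetry criterion \cite{HibiDualPolytopes}. First I would record the geometry: the first $n-1$ columns of the matrix defining $R_n$ all satisfy $x_1 = 1$ and $x_n = 0$, so $\tilde{R}_n$ is an $(n-2)$-dimensional lattice simplex lying in the affine lattice slice $\{x_1 = 1,\ x_n = 0\}$, whereas the discarded column $c_n = (1,1,\ldots,1)$ has $x_n = 1$. Thus $R_n$ is a lattice pyramid over $\tilde{R}_n$ whose apex sits at lattice height exactly $1$ in the $x_n$-direction.

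The crucial step is the well-known invariance of the $h^*$-polynomial under height-$1$ lattice pyramids. To see it here, I would slice $t R_n$ by the value of the $x_n$-coordinate: a lattice point of $t R_n$ with $x_n = j$ is exactly $j\,c_n$ plus a lattice point of $(t-j)\tilde{R}_n$, which gives $i(R_n, t) = \sum_{s=0}^{t} i(\tilde{R}_n, s)$. Summing the Ehrhart series and clearing one factor of $(1-z)$ then yields $h^*_{R_n}(z) = h^*_{\tilde{R}_n}(z)$, together with $\dim R_n = \dim \tilde{R}_n + 1$. Since the unimodular transformation of consecutive row differences carries $Q_n$ to $R_n$, Theorem~\ref{thm:lecthalldes} identifies $h^*_{R_n}(z) = A_{n-1}(z)$, and therefore $h^*_{\tilde{R}_n}(z) = A_{n-1}(z)$.

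It remains to confirm that this is a genuine, symmetric $h^*$-vector. The Eulerian polynomial $A_{n-1}(z)$ has degree $n-2$, which matches $\dim \tilde{R}_n$, and its coefficient sequence is palindromic with leading and constant coefficients both equal to $1$ (the identity and the reversal being the unique permutations of $[n-1]$ with $0$ and $n-2$ descents). Hence the full vector $(h^*_0(\tilde{R}_n), \ldots, h^*_{n-2}(\tilde{R}_n))$ is symmetric, and Hibi's theorem immediately gives that $\tilde{R}_n$ is translation equivalent to a reflexive polytope.

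I expect the only delicate point to be bookkeeping rather than genuine difficulty: one must check that the pyramid over $\tilde{R}_n$ really has lattice height $1$ with respect to the lattice induced on the affine span of $R_n$ (so that the pyramid-invariance of $h^*$ applies verbatim), and that $\deg A_{n-1} = \dim \tilde{R}_n$ so that palindromicity of the Eulerian coefficients is exactly the symmetry condition demanded by Hibi. Both follow at once from the coordinate description above, so no serious computation is required.
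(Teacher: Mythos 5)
Your proposal is correct and follows essentially the same route as the paper: identify $R_n$ as a height-$1$ lattice pyramid over $\tilde{R}_n$, use Theorem~\ref{thm:lecthalldes} to get $h^*_{R_n}(z)=h^*_{\tilde{R}_n}(z)=A_{n-1}(z)$, and conclude via Hibi's symmetry criterion. The only difference is that you prove the pyramid-invariance of the $h^*$-polynomial by slicing along the $x_n$-coordinate, where the paper simply cites it; your verification of the lattice-height-$1$ condition and of $\deg A_{n-1}=\dim\tilde{R}_n$ is a welcome bit of added care but not a new argument.
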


\begin{proof}
Theorem~\ref{thm:lecthalldes} implies that the $h^*$-vector of $R_n$ is given by $A_{n-1}(z)$.
It is not hard to see that for a height-1 lattice pyramid over a polytope $P$, the $h^*$-polynomial of the pyramid is the same as that of $P$~\cite[Theorem 2.4]{BeckRobinsCCD}.  
By the aforementioned theorem of Hibi, since $A_{n-1}(z)$ has symmetric coefficients and is of degree $n-2$, and $R_n$ is a lattice pyramid over the $(n-2)$-dimensional polytope $\tilde{R}_n$, the result follows.
\end{proof}


\section{Triangulating the Lecture Hall Cone}\label{sec:triangulation}

In this section we prove that the polytopes $R_n$ admit regular, flag, unimodular triangulations.
We can represent $R_n$ as the convex hull of
\[
\left[
\begin{array}{cccc}
1 & 1 & \cdots & 1 \\
1 &  1 & \cdots & 1 \\
0 &  n-2 & \cdots & 1 \\
\vdots &  & \ddots & \vdots \\
0 &  0 & \cdots & 1 \\
\end{array}
\right], 
\left[
\begin{array}{c}
1 \\
0 \\
0 \\
\vdots \\
0 \\
\end{array}
\right],
\, \text{and} \, 
\left[
\begin{array}{c}
1 \\
n-1 \\
0 \\
\vdots \\
0 \\
\end{array}
\right]\, .
\]
Thus, we see that the intersection of $R_{n}$ with the hyperplane $\lambda_{n-1}=1$, which arises as the convex hull of the matrix on the left above, is a sub-polytope of $R_n$ that is an embedded copy of $R_{n-1}$.
Thus, $R_n$ is a union of two pyramids, with different heights, over $R_{n-1}$, 
pictured in Figure~\ref{fig:twopyramids}.

\begin{figure}
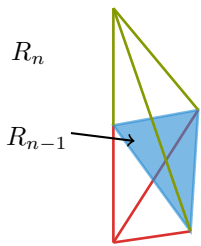

  \begin{polyhedron}{dim=3,phi=60,theta=70}
  \point{(0,3,0)}{A}
  \point{(5,3,0)}{B}
  \point{(0,5,0)}{C}
  \point{(0,3,3)}{D}
  \point{(0,3,-3)}{E}
  \edge{points={B,E},status=alert}
  \edge{points={A,E},status=alert}
  \edge{points={C,E},status=alert}
  \polygon{points={A,B,C},color=solarized-blue, opacity=0.6}
  \edge{points={B,D},status=focus}
  \edge{points={A,D},status=focus}
  \edge{points={C,D},status=focus}
  
  \node [label={[anchor=north]below: $R_n$}] at (0,1,3) {};
  
  \node at (0,1.2,0) { $R_{n-1}$};
\draw[thick,->,bend left=45] (0,2,0) -- (0,3.5,-0.5);
  \end{polyhedron}
  \caption{$R_n$ is a union of two pyramids over $R_{n-1}$.}
  \label{fig:twopyramids}
\end{figure}

Given a lattice polytope $S\subset \reals^n$, consider two integral linear functionals $\bl$ and $\bu$, where $\bl$ and $\bu$ have integer coefficients, such that $\bl\leq \bu$ on $S$.
Define the \emph{chimney polytope}
\[
\chim(S,\bl,\bu) \ := \ \left\{ (y,x)\in \reals\times \reals^n : \, x\in S, \ \bl(x)\leq y\leq \bu(x) \right\} .
\]

\begin{theorem}[Haase, Paffenholz, Piechnik, Santos \cite{regulartriangulations}]\label{thm:chimney}
If $S$ admits a regular, flag, unimodular triangulation, then so does $\chim(S,\bl,\bu)$.
\end{theorem}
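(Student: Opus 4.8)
The plan is to lift the given triangulation of $S$ fiberwise to the chimney, obtaining an explicit ``staircase'' triangulation, and then to certify regularity by a single convex height function while the combinatorial properties of the base triangulation propagate upward. First I would normalize the geometry: the integral shear $(x,y)\mapsto(x,\,y-\bl(x))$ lies in $\mathrm{SL}_{d+1}(\integers)$ and carries $\chim(S,\bl,\bu)$ to $\{(x,y):x\in S,\ 0\le y\le h(x)\}$, the region under the graph of the integral functional $h:=\bu-\bl$, which is nonnegative on $S$; since unimodular maps preserve the existence of regular, flag, unimodular triangulations, I may assume $\bl=0$. Fix such a triangulation $\mathcal T$ of $S$, induced by a height function $\omega$ on $S\cap\integers^d$. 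The key observation is that every maximal cell $\sigma=\mathrm{conv}\{v_0,\dots,v_d\}$ of $\mathcal T$ is a unimodular simplex, so $\sigma\cap\integers^d=\{v_0,\dots,v_d\}$; consequently the lattice points of the chimney lying over $\sigma$ are exactly the ``columns'' $(v_i,y)$ with $y\in\integers$ and $0\le y\le h(v_i)$, for $i=0,\dots,d$.

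Next I would triangulate the slab of the chimney over each $\sigma$ by a staircase. Fix a global linear order on the vertices of $S$. A maximal cell over $\sigma$ is then described by a monotone lattice path that climbs from the floor $\{(v_i,0)\}_i$ to the ceiling $\{(v_i,h(v_i))\}_i$ by increasing the height of a single column by one at each step, with ties in the choice of column broken by the fixed order. Each resulting $(d+1)$-simplex is unimodular: after translation its edge vectors form a lattice basis obtained from the unimodular basis $\{v_i-v_0\}$ of $\sigma$ together with the unit vector in the $y$-direction, so its determinant is $\pm1$. Because the staircase over $\sigma$ depends only on the values $h(v_i)$ and the order of the vertices of $\sigma$---both restrictions of global data---the local triangulations agree on the chimney over any face shared by two adjacent cells of $\mathcal T$, and therefore glue to a triangulation of the whole chimney.

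It remains to verify the three adjectives. Unimodularity is already in hand. Flagness follows from the fibered structure: a set of chimney vertices spans a cell precisely when its projection spans a cell of $\mathcal T$ and its heights in each column are compatible with the chosen staircase, and both conditions are cut out by pairwise constraints (the base triangulation is flag, two heights in one column conflict exactly when they are non-consecutive, and the standard staircase of each prism is flag), so every minimal non-face is a pair. For regularity I would exhibit a single convex height function of the form $\Omega(x,y)=M\,\omega(x)+c(y)+\varepsilon\,\langle w,(x,y)\rangle$ on the lattice points of the chimney, with $M$ large, $c$ strictly convex, $\varepsilon$ small, and $w$ generic: the dominant term forces the induced regular triangulation to refine $\mathcal T$ in the base, the convex term $c$ cuts each column into unit segments, and the generic perturbation splits every remaining prism cell into simplices. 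One then checks that this regular triangulation is exactly the flag, unimodular staircase triangulation built above.

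I expect this last step to be the main obstacle. Already the naive choice $\Omega=M\,\omega(x)+c(y)$ leaves the lower-hull cells as products of a base simplex with a unit segment---genuine prisms rather than simplices---so the perturbation $w$ must be engineered to cut each such prism into precisely the staircase simplices, simultaneously over all cells of $\mathcal T$ and consistently across shared faces, all without destroying unimodularity or flagness. Pinning down one global convex function (equivalently, a lexicographic or pulling rule) that reproduces the combinatorially defined staircase everywhere is the technical heart of the argument; everything else reduces to the unimodularity of the base simplices and the flagness of $\mathcal T$.
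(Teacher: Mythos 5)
First, a point of order: the paper does not prove this statement at all --- it is quoted verbatim from Haase--Paffenholz--Piechnik--Santos \cite{regulartriangulations} and used as a black box in the proof of Theorem~\ref{thm:lectchimney}. So the only meaningful comparison is with the argument in the cited source, and your fiberwise ``staircase over the base triangulation'' strategy is indeed the standard route taken there: shear away $\bl$, pull back the regular subdivision of $S$ to get truncated prisms over the unimodular cells, and refine each truncated prism consistently. Your observation that \emph{any} full triangulation of the truncated prism over a unimodular simplex is automatically unimodular (a maximal cell must take two consecutive points from one column and one from each other column, and the resulting determinant reduces to that of the base simplex) is correct and is the right reason unimodularity is free here.

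The genuine gap is the one you yourself flag: regularity. As written, the proposal never produces the certifying convex function; it only states the desired shape $M\,\omega(x)+c(y)+\varepsilon\langle w,(x,y)\rangle$ and concedes that a \emph{generic} $w$ need not cut the prisms into the combinatorially prescribed staircase, consistently across shared faces. A generic perturbation will produce \emph{some} regular refinement, and by the unimodularity observation above that refinement is still unimodular --- but then your flagness argument, which leans on the specific staircase combinatorics, no longer applies to the triangulation you actually certified as regular. To close the gap you must commit to one triangulation and certify that one: the standard fix is to replace the generic $w$ by structured column weights (e.g.\ $\Omega(v_i,y)=M\,\omega(v_i)+y^2+\varepsilon\,y\,\rho(v_i)$ for a functional $\rho$ injective on the vertices of $\mathcal T$, the analogue of the $i\cdot j$ weights that make the staircase triangulation of $\Delta^a\times\Delta^b$ regular), or to invoke a pulling/placing order on the lattice points that is compatible with $\omega$. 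Two secondary steps are also asserted rather than proved: (i) that the locally chosen monotone paths restrict consistently to the truncated prisms over shared faces of $\mathcal T$ (true for a lexicographic rule tied to the global vertex order, but it needs a sentence of verification), and (ii) that ``base flag $+$ consecutive-in-column $+$ staircase flag'' implies every minimal non-face of the glued complex is a pair --- this requires showing that a vertex set whose pairs are all edges projects, by flagness of $\mathcal T$, onto a cell $\tau$, and then invoking flagness of the staircase triangulation of the single truncated prism over $\tau$. None of these is a wrong turn, but the regularity certificate is the heart of the theorem and is exactly the piece still missing.
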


\begin{theorem}\label{thm:lectchimney}
For all $n\geq 1$, the polytope $R_n$ admits a regular, flag, unimodular triangulation.
\end{theorem}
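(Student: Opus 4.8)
The plan is to induct on $n$, with Theorem~\ref{thm:chimney} doing the heavy lifting. The cases $n\le 2$ are clear, since $R_1$ is a point and $R_2$ is a unimodular segment, so assume $R_{n-1}$ carries a regular, flag, unimodular triangulation. I will use the decomposition recorded above: writing $y$ for the second coordinate in the displayed representation of $R_n$, the embedded copy of $R_{n-1}$ (the left-hand matrix) sits in the slice $\{y=1\}$, the short apex $p=(1,0,\dots,0)$ lies at $y=0$, and the tall apex $q=(1,n-1,0,\dots,0)$ lies at $y=n-1$. Thus $R_n=P_-\cup P_+$, where $P_-=\mathrm{conv}(R_{n-1}\cup\{p\})$ and $P_+=\mathrm{conv}(R_{n-1}\cup\{q\})$ are the two pyramids of Figure~\ref{fig:twopyramids}, glued along $R_{n-1}$.

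First I would realize the tall pyramid as a chimney. Projecting out the coordinate $y$ identifies the base of $P_+$ with $R_{n-1}$, and since every point of $P_+$ has $y\ge 1$ while the apex $q$ projects onto a single vertex of $R_{n-1}$, one gets $P_+=\chim(R_{n-1},\bl,\bu)$ with $\bl\equiv 1$ and $\bu$ the affine function equal to $n-1$ at that distinguished vertex and to $1$ at every other vertex of $R_{n-1}$. The crucial point is that $\bu$ is \emph{integral}: here $\bu-1$ is $n-2$ times the barycentric coordinate of the distinguished vertex, and this is integer-valued on the lattice precisely because that vertex has lattice height exactly $n-2$ above its opposite facet in $R_{n-1}$, matching the height of the apex $q$. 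Granting this, Theorem~\ref{thm:chimney} together with the inductive hypothesis produces a regular, flag, unimodular triangulation of $P_+$, which we may take to restrict to the inductive triangulation of $R_{n-1}$ on the bottom facet.

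The short pyramid is easier: $p$ lies at lattice distance $1$ from the affine span of the slice $\{y=1\}$, so $P_-$ is a height-one lattice pyramid over $R_{n-1}$. Coning the inductive triangulation of $R_{n-1}$ off the apex $p$ therefore gives a triangulation of $P_-$ all of whose maximal simplices are unimodular, since a height-one pyramid over a unimodular simplex is unimodular; coning also preserves regularity and flagness, and this triangulation again restricts to the inductive triangulation on $R_{n-1}$.

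It remains to glue the two triangulations along their common facet $R_{n-1}$. They agree there by construction, so their union is a unimodular triangulation of $R_n$, and I expect the verification that it is \emph{regular} and \emph{flag} to be the main obstacle. For regularity I would take convex lifting functions inducing the two pieces, normalize them by affine corrections to agree on the slice, and add a large multiple of $|y-1|$; this term is affine on each of $P_-$ and $P_+$, hence does not disturb either induced triangulation, but is convex across $\{y=1\}$, so the combined lift is convex and the slice $R_{n-1}$ appears as a union of faces. Flagness should follow because a minimal non-face lying in a single piece is an edge by that piece's flagness, while one meeting both sides is forced to be an edge by the agreement of the two triangulations on the slice. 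The one genuinely delicate ingredient, beyond this gluing, is the integrality of the chimney functional $\bu$---equivalently, the lattice-height computation above---which is what makes Theorem~\ref{thm:chimney} applicable to the tall pyramid and is where the arithmetic of the lecture hall ray generators enters.
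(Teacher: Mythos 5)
Your proposal is correct and follows essentially the same route as the paper: the same decomposition of $R_n$ into a chimney polytope over the embedded $R_{n-1}$ (with $\bl=\hat{1}$) plus a height-$1$ lattice pyramid over the bottom face, with Theorem~\ref{thm:chimney} and induction doing the work. The only differences are cosmetic: the paper dispels your integrality worry by simply exhibiting the upper functional explicitly as $(n-1)\lambda_{n-1}-\sum_{i=1}^{n-2}\lambda_i$ (which takes value $n-1$ at the distinguished vertex and $1$ at the others), and it glues by coning over whatever triangulation the chimney induces on the $\hat{1}$-face, so the compatibility and regularity of the glued triangulation that you verify by hand are handled more tersely there.
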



\begin{proof}
Let $\hat{1}$ denote the constant function with value $1$.
We first observe that $R_n$ is the union of
\[
\chim \left( R_{n-1}, \hat{1}, (n-1)\lambda_{n-1}-\sum_{i=1}^{n-2}\lambda_i \right)
\]
with the height-1 lattice pyramid formed from the convex hull of $(0,\ldots,0,1)$ and the face of the chimney polytope above obtained from the lower linear functional $\hat{1}$.
To see this, recall from our previous discussion that $R_{n-1}$ arises as the $(\lambda_{n-1}=1)$-slice of $R_n$ in $\reals^n$.
The above chimney polytope is the convex hull of the embedded copy of $R_{n-1}$ and the height-$(n-1)$ pyramid
point, as is verified by observing that the upper linear functional on $R_{n-1}$ agrees with the
$\lambda_{n-1}$-value of the vertices of $R_n$ with $(n-1)$st coordinate strictly greater than 0.
That the remainder of $R_n$ is contained, as claimed, in the height-1 lattice pyramid is clear from our description of $R_n$ relative to the embedded copy of $R_{n-1}$.
See Figure~\ref{fig:triangulation}.

Second, we use the fact that $R_1$ is a unit lattice segment as a base for induction to establish the theorem.
Assuming that $R_{n-1}$ admits a regular, flag, unimodular triangulation, Theorem~\ref{thm:chimney} implies the same for $\chim(R_{n-1}, \hat{1}, (n-1)\lambda_{n-1}-\sum_{i=1}^{n-2}\lambda_i)$.
This triangulation restricts to a regular, flag, unimodular triangulation of the face defined by the lower linear functional $\hat{1}$, and taking a unimodular lattice pyramid over this face extends this triangulation to all of $R_n$.
\end{proof}

\begin{figure}
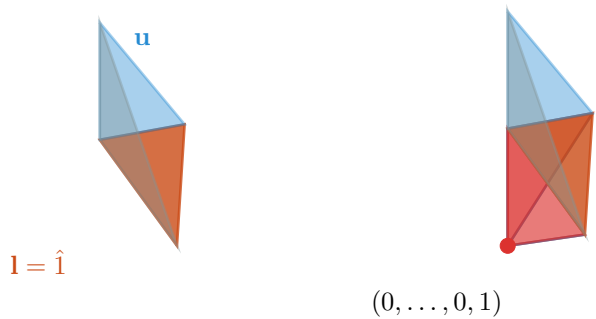

  \begin{polyhedron}{dim=3,phi=60,theta=70}
  \point{(0,3,0)}{A}
  \point{(5,3,0)}{B}
  \point{(0,5,0)}{C}
  \point{(0,3,3)}{D}
  \point{(0,3,-3)}{E}
  \polygon{points={A,B,C},color=solarized-orange,opacity=0.8} 
  \polygon{points={D,A,C},color=solarized-blue,opacity=0.4}  
  \polygon{points={D,A,B},color=solarized-base0,opacity=0.4}  
  \node [label={[anchor=north]below: {\color{solarized-blue}$\bu$}}] at (0,4,3) 
    {};
  \node [label={[anchor=north]right: { \color{solarized-orange} 
    $\bl=\hat{1}$}}] at (2,0.5,-1) {};
  \node [label={[anchor=north]right: }] at (0,-1,-3.5) {};
  \end{polyhedron}\qquad\qquad
  \begin{polyhedron}{dim=3,phi=60,theta=70}
  \point{(0,3,0)}{A}
  \point{(5,3,0)}{B}
  \point{(0,5,0)}{C}
  \point{(0,3,3)}{D}
  \point{(0,3,-3)}{E}
  \edge{points={B,E}}
  \edge{points={A,E}}
  \edge{points={C,E}}
  \polygon{points={A,B,E},color=solarized-red,opacity=0.4}
  \polygon{points={E,A,C},color=solarized-red,opacity=0.4}
  \polygon{points={B,A,E},color=solarized-red,opacity=0.4}
  \polygon{points={A,B,C},color=solarized-orange,opacity=0.8} 
  \polygon{points={D,A,C},color=solarized-blue,opacity=0.4}
  \polygon{points={B,A,D},color=solarized-base0,opacity=0.4}
  \vertex{point={E},color=solarized-red}
  \node [label={[anchor=north]right: { $(0,\ldots,0,1)$}}] at (0,1,-3.5) {};
  \end{polyhedron}
  \caption{The triangulation of the chimney polytope restricts to a regular, 
flag, unimodular triangulation of the face defined by $\hat{1}$. Taking a 
unimodular lattice pyramid over this face extends this triangulation to all of 
$R_n$.
}
  \label{fig:triangulation}
\end{figure}


\section{Hilbert Bases}\label{sec:hilbert}

Given a pointed rational cone $C\subset \reals^n$, the integer points $C\cap \integers^n$ form a semigroup.
It is known that such a semigroup has a unique minimal generating set called the \emph{Hilbert basis} of $C$
(see, e.g., \cite{millersturmfels}).
In this section we show that the cone $L_n$ has a (we think) interesting Hilbert basis.

To an element 
\[
A \ = \ \{i_1<i_2<\cdots<i_k\} \ \in \ 2^{[n-1]} \, ,
\]
i.e., a subset $A\subseteq [n-1]:=\{1,2,\ldots,n-1\}$, we associate the vector $v_A\in \integers^n$ defined by
\[
v_A \ := \ (0,0,\ldots,0,i_1,i_2,\ldots,i_k,i_k+1) \, ,
\]
where $v_\emptyset:=(0,0,\ldots,0,1)$.
The following lemmas will be useful.
\begin{lemma}
For integers $0<i$ and $0 \leq k \leq i$,  
\[\frac{k}{i} \ \le \ \frac{k+1}{i+1} \, .
\]
\begin{proof}
From the conditions,
\[
\frac{k+1}{i+1} - \frac{k}{i} \ = \ \frac{i(k+1)-k(i+1)}{i(i+1)} \ = \ \frac{i-k}{i(i+1)} \ \geq \ 0 \, . \qedhere
\]
\end{proof}
\label{lemma1}
\end{lemma}

\begin{lemma}
For positive integers $k, \ell,i $  if 
\[
\frac{k}{i} \ \le \ \frac{\ell}{i+1}
\]
then $k < \ell$.
\label{lemma2}
\end{lemma}
\begin{proof}
The conditions imply $\ell i \geq k(i+1) = ki+k > ki.$  So $\ell > k$.
\end{proof}
\begin{theorem}\label{thm:lseqhilbertbasis}
The Hilbert basis for $L_n$ is
\[
\mH_n \ := \ \left\{ v_A : \, A\in 2^{[n-1]} \right\} .
\]
\end{theorem}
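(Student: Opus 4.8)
The plan is to identify $\mH_n$ with the set of degree-$1$ lattice points of $L_n$ for the grading $\deg(\lambda)=\lambda_n-\lambda_{n-1}$ introduced in Theorem~\ref{thm:lecthalldes}, and then to show that these degree-$1$ points are exactly the irreducible elements of the semigroup $L_n\cap\integers^n$. I would establish the two inclusions ``$\mH_n\subseteq$ Hilbert basis'' and ``Hilbert basis $\subseteq\mH_n$'' separately.

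First I would record that $\deg$ is a positive grading: from $n\lambda_{n-1}\le (n-1)\lambda_n$ one gets $\deg(\lambda)\ge \lambda_n/n>0$ for every nonzero $\lambda\in L_n$, so every nonzero lattice point has positive integer degree. A direct check of the lecture hall inequalities shows each $v_A\in L_n\cap\integers^n$ with $\deg(v_A)=1$: writing $A=\{i_1<\cdots<i_k\}$, the only nontrivial inequalities are $i_s/(n-k+s-1)\le i_{s+1}/(n-k+s)$ and $i_k/(n-1)\le (i_k+1)/n$, and both follow from the bound $i_s\le n-1-k+s$ (there are $k-s$ larger elements of $A$, all at most $n-1$). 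Since $\deg$ is a positive grading, any element of degree $1$ is irreducible, so each $v_A$ lies in the Hilbert basis, giving $\mH_n\subseteq$ Hilbert basis. The $v_A$ are visibly distinct, so $\card{\mH_n}=2^{n-1}$.

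For the reverse inclusion I would show that $\mH_n$ is all of the degree-$1$ lattice points and that these generate the semigroup. For the first point, the proof of Theorem~\ref{thm:lecthalldes} gives $i(P_{n-1},1)=2^{n-1}$ as the number of degree-$1$ lattice points; since $\mH_n$ consists of $2^{n-1}$ distinct degree-$1$ points, it is all of them. For generation I would invoke the unimodular triangulation of Theorem~\ref{thm:lectchimney}: under the unimodular equivalence $L_n\cong\cone(R_n)$, degree corresponds to the height function, and the triangulation of $R_n$ decomposes $L_n$ into unimodular cones $\cone(w_1,\ldots,w_n)$ whose ray generators $w_j$ are height-$1$ (hence degree-$1$) lattice points. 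In a unimodular cone every lattice point is a unique nonnegative integer combination of the $w_j$, and its degree is the sum of the coefficients; hence every lattice point of $L_n$ is a sum of degree-$1$ lattice points. Therefore any irreducible element has degree $1$, so the Hilbert basis is contained in the degree-$1$ points, i.e.\ in $\mH_n$. Combining the two inclusions yields $\mH_n=$ Hilbert basis.

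The main obstacle is the generation step, namely proving that the semigroup is generated in degree $1$. I expect the cleanest route is the triangulation argument above, but it requires care in transferring the unimodular triangulation of the polytope $R_n$ to a decomposition of $L_n$ into unimodular cones with degree-$1$ ray generators, and in checking that $\deg$ matches the height grading on $\cone(R_n)$. As an alternative to invoking Section~\ref{sec:triangulation}, one could attempt a direct ``peeling'' induction on $\deg(\lambda)$: given $\lambda$ with $\deg(\lambda)\ge 2$, produce an explicit $v_A\in\mH_n$ read off from the top coordinates of $\lambda$ with $\lambda-v_A\in L_n\cap\integers^n$; the delicate part there is verifying that subtracting $v_A$ preserves every lecture hall inequality, which is precisely the inequality bookkeeping the triangulation argument lets us avoid.
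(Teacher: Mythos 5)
Your argument is correct, and it is precisely the first of the two methods the paper itself offers for finishing the proof: show $\mH_n$ lies in the Hilbert basis because its elements have degree $1$ in a positive grading, count the degree-$1$ lattice points via Theorem~\ref{thm:lecthalldes} to see that $\mH_n$ is all of them, and deduce generation in degree $1$ from the unimodular triangulation of Theorem~\ref{thm:lectchimney}. The transfer you worry about is harmless: the row-difference map sending $(\lambda_n,\dots,\lambda_1)$ to $(\lambda_n-\lambda_{n-1},\dots,\lambda_2-\lambda_1,\lambda_1)$ is the unimodular equivalence with $\cone(R_n)$, its first coordinate is exactly $\deg$, and the vertices of any triangulation of $R_n$ sit at height $1$, so the cones over the unimodular maximal simplices cover $L_n$ and express every lattice point as a nonnegative integer combination of degree-$1$ points. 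The paper states this route in two sentences and then elects to write out in full the alternative you mention only in passing: an explicit peeling induction on $\deg$. That induction is less delicate than you anticipate. Given $a$ of degree $t>1$, with $j$ the largest index satisfying $a_j<j$, the paper subtracts $b=(a_1,\dots,a_j,j+1,\dots,n)\in\mH_n$, leaving $c=(0,\dots,0,a_{j+1}-(j+1),\dots,a_n-n)$ of degree $t-1$; the only bookkeeping needed is that the quantity $(i-1)a_i-i\,a_{i-1}$ is literally unchanged when $a_i$ is replaced by $a_i-i$ and $a_{i-1}$ by $a_{i-1}-(i-1)$, so $c\in L_n$. Each approach buys something: yours reuses the triangulation machinery and avoids coordinate computations, while the paper's detailed version is self-contained, independent of Section~\ref{sec:triangulation}, and exhibits the number-theoretic structure of $L_n$ directly.
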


\begin{remark}
There is a standard bijection between subsets of $n-1$ and compositions (i.e., ordered partitions) of $n$ given by sending $\{i_1<i_2<\cdots<i_k\}$ to $(n-i_k)+(i_k-i_{k-1})+\dots + (i_2-i_1)+i_1$.
This is precisely the unimodular transformation taking consecutive row differences previously used to convert between $Q_n$ and $R_n$.
For example, the subset $\{2,4,5\}\subset [6]$ bijects to the composition $(1,1,2,2)$, which is equivalent to our consecutive row difference transformation sending
\[
\left[
\begin{array}{c}
6 \\
5\\
4\\
2\\
0\\
0
\end{array}
\right] 
\mapsto
\left[
\begin{array}{c}
1 \\
1\\
2\\
2\\
0\\
0
\end{array}
\right] \, .
\]
\end{remark}

\begin{proof}[Proof of Theorem~\ref{thm:lseqhilbertbasis}]
 $\mH_n$ can be characterized as the set of 
 $(\la_1, \ldots, \la_n) \in \integers^n $ such that  $\la_n -\la_{n-1}=1$ and 
  for  some $j$,  $1 \leq j \leq n$,
 \[ 
 0 = \la_1 = \ldots = \la_{j-1} < \la_j < \ldots \la_{n-1} <\la_{n} \leq n.
 \]
We first show that $\mH_n\subset L_n$.  Let $\la \in \mH_n$.  
We need to check that $\frac{ \la_i }{ i } \le \frac{ \la_{i+1} }{ i+1 } $ for 
$1 \leq i <n$.  If $\la_i = 0$ the condition holds.
If $\la_i > 0$, then by  Lemma \ref{lemma1} and the characterization of $\mH_n$, 
\[
\frac{\la_i}{i} \ \le \ \frac{\la_i+1}{i+1} \ \le \ \frac{\la_{i+1}}{i+1}.
\]
We next show that
 $\mH_n$ contains all degree-1 elements in $L_n$ with respect to the grading $\lambda_n-\lambda_{n-1}=1$.
Let $\lambda$ be a degree-1 element in $L_n$.
Since $\lambda\in L_n$, 
\[
-n \, \lambda_{n-1}+(n-1)\lambda_n \ \ge \ 0 \, ,
\]
and substituting $\lambda_n-1=\lambda_{n-1}$ gives
\[
-n \, (\lambda_n-1)+(n-1)\lambda_n \ \ge \ 0 \, ,
\]
from which it follows that
\[
n \ \ge \ \lambda_n \, .
\]
To check that the other coordinates of $\la$ satisfy the constraints of $\mH_n$, note that if $\la_j > 0$ for
$1 \leq j < n-1$, then  since,  as a lecture hall partition, $\frac{ \la_j }{ j } \le \frac{ \la_{j+1} }{ j+1 } $, we conclude with Lemma \ref{lemma2} that $\la_j < \la_{j+1}$.
It is immediate that, since every $v_A$ has degree 1, no $v_A$ is a nonnegative integer combination of other elements in $\mH_n$.
Hence, our Hilbert basis must contain $\mH_n$.

To finish the proof, we show by induction on the degree that all elements in $L_n$ are nonnegative integer combinations of elements of $\mH_n$.  We have just shown that degree 1 elements of $L_n$ are in $\mH_n$.  Let $a \in L_n$ have degree $t>1$.  Let $j$ be the largest index such that $a_j < j$. 
If $j=n-1$ or $j=n$, write $a=b+c$ where $b=(0, \ldots,0,1)$ and
$c=(a_1, \ldots, a_{n-1}, a_n-1)$.  Then $b \in \mH_n$ and $c$ has degree $t-1$ and, by Lemma \ref{lemma1}, $c \in L_n$.

Otherwise, $j \leq n-2$ and  $a=b+c$ where
\[b= (a_1, \ldots,a_j,j+1,\ldots,n)\]
and
\[c=(0,0,\ldots,0,a_{j+1}-(j+1),a_{j+2}-(j+2),\ldots,a_n-n).\]
Then $b \in \mH_n$ and $c$ has degree $t-1$.  It remains to show $c \in L_n$.
As $a\in L_n$, we have for $1 \leq i < n$
\[
(i-1)a_i-i \, a_{i-1} \ \geq \ 0 \, .
\]
It follows that $c \in \ L_n$,  since for all $i\geq j$,
\[
(i-1)(a_i-i)-i(a_{i-1}-(i-1)) \ = \ (i-1)a_i-i \, a_{i-1} \ \geq \ 0 \, . \qedhere
\]
\end{proof}

\begin{remark}
We give here an alternative proof that all elements in $L_n$ are nonnegative integer combinations of elements of $\mH_n$ --- we chose to highlight the proof above because we feel it is independently interesting, as it explicitly relies on the number-theoretic structure of $L_n$.
The existence of the unimodular triangulation given by Theorem~\ref{thm:lectchimney} implies that the Hilbert basis for $L_n\cap \integers^n$ consists of precisely the elements of degree one.
From the bijection in the proof of Theorem~\ref{thm:lecthalldes}, we have that the elements of degree $t$ in $L_n$ are in bijection with the elements of $tP_{n-1} \cap \integers^{n-1}$.
Since $\left| tP_{n} \cap \integers^{n} \right|=(t+1)^{n}$, we have $\left|\left\{ \la \in L_n  : \deg(\la)=t \right\}\right|=(t+1)^{n-1}$.
Thus, the Hilbert basis $\mH_n$ contains (at most) $2^{n-1}$ elements, since $\left|\left\{ \la \in L_n  : \deg(\la)=1 \right\}\right|=2^{n-1}$.
As we have identified $2^{n-1}$ such elements, these must constitute all of $\mH_n$.
\end{remark}

\section{Triangulations and Unit Cubes}\label{sec:outlook}

In this section we briefly discuss consequences of the existence of regular, flag, unimodular triangulations of $Q_n$.
It is known~\cite{StanleyDecompositions} that for a lattice polytope $P$ with a unimodular triangulation, the triangulation has $\sum_{i=0}^{\dim(P)} h^*_i(P)$ maximal unimodular simplices.
Hence, Theorem~\ref{thm:lecthalldes} implies that any unimodular triangulation of $Q_n$ has $(n-1)!$ maximal unimodular simplices. 

There is another well-known polytope having $h^*$-polynomial given by $A_{n-1}(z)$ and admitting regular, flag, unimodular triangulations, namely the cube $[0,1]^{n-1}$.
An interesting connection between $L_n$ and $\cone([0,1]^{n-1})$ is that the integer points in both of them are generated over $\integers_{\geq 0}$ by degree-$1$ elements in bijection with subsets of $[n-1]$.
Namely, we can encode each subset
\[
  A \ = \ \{i_1<i_2<\cdots<i_k\}\ \subseteq \ [n-1]
\]
as a vector in $\integers^n$ in two ways.
First, let $(1,m^A):=(1,m^A_1,\ldots,m^A_{n-1})$ be defined by $m^A_i=1$ if $i\in A$, and $m^A_i=0$ otherwise.
Second, let $v_A\in \integers^n$ be defined as before.
Informally, for $(1,m^A)$ we encode $A$ by a characteristic vector, while for $v_A$ we encode $A$ directly by placing the elements of $A$ in increasing order, making the last entry one more than $\max(A)$, and padding the left-most entries with zeros as needed.
Note that, by definition, $(1,m^A)\in \cone([0,1]^{n-1})$ and $v_A\in L_n$ each have degree~$1$ in their respective gradings.
Thus, both $L_n$ and $\cone([0,1]^{n-1})$ can be viewed as cones generated by the subsets of $[n-1]$.
While their geometric and arithmetic structure are quite different, our results and Conjecture~\ref{conj:lecthallsperner} below indicate that these cones share surprising properties.

An important regular, flag, unimodular triangulation of $[0,1]^{n-1}$ is obtained by intersecting
$[0,1]^{n-1}$ with the real braid arrangement, 
i.e., the set of hyperplanes defined by $x_i=x_j$ for all $i\neq j$ between $1$ and $n-1$.
Recall that a \emph{Sperner $2$-pair} of $[n-1]$ is a pair of subsets $A,B\subset [n-1]$ such that neither $A$ nor $B$ is contained in each other. 
One important property of the braid triangulation of $[0,1]^{n-1}$ is that the number of minimal non-edges
equals the number of Sperner $2$-pairs of $[n-1]$.
(All of these properties are most easily seen by observing that $[0,1]^{n-1}$ is the order
polytope~\cite{StanleyTwoPosetPolytopes} for an $(n-1)$-element antichain, hence the maximal unimodular
simplices in this triangulation correspond to linear extensions of the antichain, i.e., chains in the Boolean
algebra $2^{[n-1]}$.)

Based on experimental evidence for $n\leq 7$, and the established connections between $L_n$ and $\cone([0,1]^{n-1})$, the following conjecture seems plausible.
\begin{conjecture}\label{conj:lecthallsperner}
There exists a regular, flag, unimodular triangulation of $R_n$ that admits a shelling order such that the maximal simplices of the triangulation are indexed by $\pi\in
S_{n-1}$ and each such simplex is attached along $\des(\pi)$ many of its facets.
Further, the number of minimal non-edges in the triangulation is the number of Sperner $2$-pairs of $[n-1]$.
\end{conjecture}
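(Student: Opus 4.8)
The plan is to take as the candidate triangulation the regular, flag, unimodular triangulation of $R_n$ already produced by Theorem~\ref{thm:lectchimney}, and to prove the conjecture by induction on $n$, carrying through the chimney-plus-pyramid recursion $R_n = \chim\!\left(R_{n-1},\hat 1, \bu\right)\cup(\text{pyramid})$ three pieces of data simultaneously: a bijection between the maximal simplices of the triangulation $T_n$ and $S_{n-1}$, a shelling order, and the edge count $f_1(T_n)$. Since Theorem~\ref{thm:chimney} guarantees only the \emph{existence} of a regular, flag, unimodular triangulation of the chimney, the first preparatory step is to fix the explicit staircase/lex refinement of \cite{regulartriangulations} and record exactly which simplices, faces, and edges it introduces over each cell of $T_{n-1}$; all later bookkeeping is performed relative to that concrete choice.

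For the permutation indexing and the descent-attachment claim, I would first observe that the \emph{count} is automatic: because $T_n$ is unimodular and the $h^*$-polynomial of $R_n$ is $A_{n-1}(z)$ (Theorem~\ref{thm:lecthalldes}), Stanley's shelling identity \cite{StanleyDecompositions} gives that in \emph{any} shelling of $T_n$ the number of maximal simplices whose restriction face has size $k$ equals $[z^k]A_{n-1}(z) = |\{\pi\in S_{n-1}:\des\pi = k\}|$. The real content of the conjecture is thus a \emph{natural} bijection, compatible with the subset encoding $v_A$ of the vertices, under which the restriction size of each simplex equals the descent number of its permutation. I would build this bijection recursively from the chimney construction; the subtle point is that the number of cells of $T_n$ lying over a fixed maximal simplex $\tau$ of $T_{n-1}$ is the normalized volume of the chimney over $\tau$, which depends on the values of $\bu$ at the vertices of $\tau$ and is therefore not constant across base cells. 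Consequently the bijection cannot be the naive ``insert the largest letter into $\sigma$'' map; instead one must label cells by the monotone staircase data of the chimney refinement and verify, via the behavior of restriction faces under the staircase and the final pyramid, that the aggregate distribution of restriction sizes reproduces the Eulerian recursion $A_{n-1,k} = (k+1)A_{n-2,k} + (n-1-k)A_{n-2,k-1}$ in a way that assembles into a single global labeling by $S_{n-1}$. Regularity of $T_n$ supplies the shelling (a line shelling from the lifting function), and the task is to align it with this labeling so that attachment numbers equal descents.

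For the final sentence, note that flagness of $T_n$ (already part of Theorem~\ref{thm:lectchimney}) makes the minimal non-faces exactly the non-edges, so the claim is equivalent to $f_1(T_n) = \binom{2^{n-1}}{2} - s_{n-1}$, where $s_{n-1}$ is the number of Sperner $2$-pairs of $[n-1]$; equivalently, $T_n$ has exactly as many edges as the braid triangulation of $[0,1]^{n-1}$, whose edges are the comparable pairs of subsets. I would compute $f_1(T_n)$ recursively through the chimney construction, classifying each edge of $T_n$ as lying in the base copy of $R_{n-1}$, in the chimney over a prescribed face of $T_{n-1}$, or as incident to the pyramid apex $(0,\dots,0,1)$, and then summing these contributions and comparing them with the recursion satisfied by the number of comparable pairs of subsets of $[n-1]$.

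The main obstacle is this last step. As the case $n=3$ already shows, the vertex–subset correspondence $v_A$ does \emph{not} send the edges of $T_n$ to comparable pairs of subsets: there the unique non-edge joins $v_\emptyset$ and $v_{\{2\}}$, a \emph{comparable} pair, whereas the cube's unique non-edge joins the incomparable pair $\{1\},\{2\}$. Hence the equality of edge counts is a genuine numerical coincidence rather than a combinatorial isomorphism with the order-polytope triangulation of the cube, and there is no structural shortcut—one must actually carry out the recursive edge count for the explicit triangulation of \cite{regulartriangulations} and match it, term by term, to the comparable-pair recursion. A secondary but real difficulty, already visible in the second paragraph, is that the non-constant per-cell multiplicities force the simplex–permutation bijection to be defined through the staircase data rather than a clean insertion rule, so one must verify that this labeling is both well defined globally and compatible with whatever explicit description of the edges of $T_n$ ultimately makes the Sperner count tractable.
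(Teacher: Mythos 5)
The statement you are addressing is Conjecture~\ref{conj:lecthallsperner}, which the paper explicitly leaves open (``Conjecture~\ref{conj:lecthallsperner} has thus far proven to be a real challenge''), so there is no proof in the paper to compare against. Your proposal is likewise not a proof but a plan, and you candidly say so; the question is whether the plan closes the gap, and it does not. What you do establish correctly is that the first sentence of the conjecture is essentially automatic given Theorem~\ref{thm:lectchimney}: any regular triangulation is shellable, and for a unimodular triangulation the $h$-vector of the shelling equals the $h^*$-vector $A_{n-1}(z)$ of $R_n$, so the multiset of attachment numbers matches the descent distribution on $S_{n-1}$ and \emph{some} bijection with the required property exists. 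That observation is worth making explicit, since it isolates the second sentence as the only real content.

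The genuine gap is precisely the step you defer: the claim that the number of minimal non-edges equals the number of Sperner $2$-pairs of $[n-1]$. Your plan is to ``carry out the recursive edge count for the explicit triangulation of \cite{regulartriangulations} and match it, term by term, to the comparable-pair recursion,'' but this is the entire difficulty, and nothing in the proposal makes it tractable. Your own $n=3$ computation is correct and instructive --- the unique non-edge of the triangulation of $R_3$ is $\{v_\emptyset, v_{\{2\}}\}$ (since $v_{\{1\}}$ is the midpoint of that segment), a \emph{comparable} pair, whereas the cube's unique non-edge is the incomparable pair $\{\{1\},\{2\}\}$ --- but it cuts against you: it shows there is no order-theoretic isomorphism transporting the braid triangulation's edge structure to $R_n$, so the equality of counts would have to emerge from a computation you have not done and for which you give no mechanism. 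Moreover, the chimney triangulation of Theorem~\ref{thm:chimney} is produced by a general-purpose construction, and there is no evidence offered (nor any in the paper, whose experimental support concerns \emph{some} triangulation for $n\le 7$, obtained from randomly tested term orders) that this particular triangulation has the conjectured number of non-edges. The conjecture remains open, and your proposal, while a reasonable framing of the problem, does not advance it past the point where the authors left it.
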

Conjecture~\ref{conj:lecthallsperner} has thus far proven to be a real challenge.
One might hope that a ``deformed'' version of the braid triangulation for $[0,1]^{n-1}$ could be imposed on $R_n$, though so far such a triangulation has proven to be elusive.

The second part of Conjecture~\ref{conj:lecthallsperner} can be reformulated in the language of toric algebra using Sturmfels' correspondence between regular, flag, unimodular triangulations and quadratic, squarefree Gr\"{o}bner bases for toric ideals~\cite{millersturmfels,sturmfels}.
Letting $I_n$ denote the toric ideal defining the semigroup algebra $\C[L_n\cap \integers^n]$, Theorem~\ref{thm:lectchimney} implies that there exists a term order for which $I_n$ has a quadratic, squarefree Gr\"{o}bner basis.
The second part of Conjecture~\ref{conj:lecthallsperner} is equivalent to the statement that there exists a reduced Gr\"{o}bner basis for $I_n$ containing the same number of elements as Sperner $2$-pairs of $[n-1]$.
When considering randomly tested term orders, with surprising frequency we have observed squarefree quadratic initial ideals with the desired number of Gr\"{o}bner basis elements.

\bibliographystyle{plain}
\bibliography{Braun}

\begin{thebibliography}{10}

\bibitem{BatyrevDualPolyhedra}
Victor~V. Batyrev.
\newblock Dual polyhedra and mirror symmetry for {C}alabi--{Y}au hypersurfaces
  in toric varieties.
\newblock {\em J. Algebraic Geom.}, 3(3):493--535, 1994.

\bibitem{sLectHallGorenstein}
Matthias Beck, Benjamin Braun, Matthias K{\"o}ppe, Carla~D. Savage, and
  Zafeirakis Zafeirakopoulos.
\newblock $s$-lecture hall partitions, self-reciprocal polynomials, and
  {G}orenstein cones.
\newblock {\em Ramanujan J.}, 36(1--2):123--147, 2015.

\bibitem{BeckRobinsCCD}
Matthias Beck and Sinai Robins.
\newblock {\em Computing the {C}ontinuous {D}iscretely}.
\newblock Undergraduate Texts in Mathematics. Springer, New York, 2007.

\bibitem{BME1}
Mireille Bousquet-M{\'e}lou and Kimmo Eriksson.
\newblock Lecture hall partitions.
\newblock {\em Ramanujan J.}, 1(1):101--111, 1997.

\bibitem{osheajones}
Laura Bradford, Meredith Harris, Brant Jones, Alex Komarinski, Carly Matson,
  and Edwin O'Shea.
\newblock The refined lecture hall theorem via abacus diagrams.
\newblock {\em Ramanujan J.}, 34(2):163--176, 2014.

\bibitem{BrightSavage}
Katie~L. Bright and Carla~D. Savage.
\newblock The geometry of lecture hall partitions and quadratic permutation
  statistics.
\newblock In {\em 22nd International Conference on Formal Power Series and
  Algebraic Combinatorics (FPSAC 2010)}, DMTCS proc. AS, pages 569--580.
  Discrete Mathematics and Theoretical Computer Science (DMTCS), 2010.

\bibitem{CLS2005}
Sylvie Corteel, Sunyoung Lee, and Carla~D. Savage.
\newblock Enumeration of sequences constrained by the ratio of consecutive
  parts.
\newblock {\em S\'em. Lothar. Combin.}, 54A:Art. B54Aa, 12, 2005/07.

\bibitem{Ehrhart}
Eug{\`e}ne Ehrhart.
\newblock Sur les poly\`edres rationnels homoth\'etiques \`a {$n$}\ dimensions.
\newblock {\em C. R. Acad. Sci. Paris}, 254:616--618, 1962.

\bibitem{regulartriangulations}
Christian Haase, Andreas Paffenholz, Lindsay~C. Piechnik, and Francisco Santos.
\newblock Existence of unimodular triangulations---positive results.
\newblock Preprint, 2014, arXiv:1405.1687.

\bibitem{HibiDualPolytopes}
Takayuki Hibi.
\newblock Dual polytopes of rational convex polytopes.
\newblock {\em Combinatorica}, 12(2):237--240, 1992.

\bibitem{millersturmfels}
Ezra Miller and Bernd Sturmfels.
\newblock {\em Combinatorial {C}ommutative {A}lgebra}, volume 227 of {\em
  Graduate Texts in Mathematics}.
\newblock Springer-Verlag, New York, 2005.

\bibitem{pensylsavage}
Thomas~W. Pensyl and Carla~D. Savage.
\newblock Lecture hall partitions and the wreath products {$C_k\wr S_n$}.
\newblock {\em Integers}, 12B(Proceedings of the Integers Conference
  2011):Paper No. A10, 18, 2012/13.

\bibitem{savageschuster}
Carla~D. Savage and Michael~J. Schuster.
\newblock Ehrhart series of lecture hall polytopes and {E}ulerian polynomials
  for inversion sequences.
\newblock {\em J. Combin. Theory Ser. A}, 119(4):850--870, 2012.

\bibitem{StanleyDecompositions}
Richard~P. Stanley.
\newblock Decompositions of rational convex polytopes.
\newblock {\em Ann. Discrete Math.}, 6:333--342, 1980.
\newblock Combinatorial mathematics, optimal designs and their applications
  (Proc. Sympos. Combin. Math. and Optimal Design, Colorado State Univ., Fort
  Collins, Colo., 1978).

\bibitem{StanleyTwoPosetPolytopes}
Richard~P. Stanley.
\newblock Two poset polytopes.
\newblock {\em Discrete Comput. Geom.}, 1(1):9--23, 1986.

\bibitem{sturmfels}
Bernd Sturmfels.
\newblock {\em Gr\"obner {B}ases and {C}onvex {P}olytopes}, volume~8 of {\em
  University Lecture Series}.
\newblock American Mathematical Society, Providence, RI, 1996.

\end{thebibliography}
 
\end{document}